\documentclass{article}

\usepackage[latin1]{inputenc}
\usepackage[T1]{fontenc}

\usepackage{bbm}
\usepackage{xypic}

\usepackage{amsmath}
\usepackage{amsfonts}
\usepackage{amssymb}
\usepackage{amsthm}
\usepackage{graphicx}
\usepackage{enumerate}


\newtheorem{theorem}{Theorem}

\newtheorem{definition}[theorem]{Definition}
\newtheorem{lemma}[theorem]{Lemma}

\newtheorem{proposition}[theorem]{Proposition}
\newtheorem{remark}[theorem]{Remark}
\numberwithin{equation}{section}

\newcommand{\PP}{\mathbb{P}}
\newcommand{\PPP}{\hat{\mathbb{P}}}

\newcommand{\EE}{\mathbb{E}}

\newcommand{\FF}{\mathcal{F}}
\newcommand{\ff}{f}
\newcommand{\sig}{\sigma_N^{\epsilon}}
\newcommand{\Sca}{\mathcal{S}_N}


\begin{document}
\title{Inhomogeneity and universality: off-critical behavior of interfaces}
\author{Pierre Nolin}
\date{Courant Institute of Mathematical Sciences}
\maketitle


\begin{abstract}
We further study the interfaces arising in a situation of inhomogeneity. More precisely, we identify a characteristic length for the gradient percolation model, that enables us to tighten previous estimates established for it. This allows to construct non-trivial scaling limits: the limiting objects share some properties with critical percolation interfaces, but locally, they rather behave like off-critical percolation interfaces.
\end{abstract}

\section{Introduction}

The phase transition of site percolation on the triangular lattice is now mathematically well-understood. Smirnov's proof of conformal invariance in the scaling limit \cite{Sm1} has enabled to prove the convergence of critical interfaces to Schramm's SLE process with parameter $6$. SLE-based computations by Lawler, Schramm and Werner (see e.g. \cite{LSW4}) completed the rigorous proof of the existence and values of the so-called ``arm exponents'' for critical percolation \cite{SmW}. Combining these results with Kesten's scaling relations \cite{Ke4}, one gets a rather precise description of percolation near criticality in two dimensions \cite{SmW, N2}. Further ongoing work in this direction by Garban, Pete and Schramm \cite {GPS2}, where they construct the scaling limit of near-critical percolation, completes the picture.

There are two ways to describe the large-scale behavior of such lattice models. The first one is to identify the scaling limit of the geometric objects (the interfaces, the clusters). In the case of percolation, this is the convergence of interfaces or of collections of loops \cite {Sm1, Sm2, CN1, CN2} to the corresponding SLE-related conformally invariant objects. Note that usually, compactness arguments (for instance in the setup of Aizenman-Burchard \cite {AB}) provide existence of subsequential limits, but that (so far), the uniqueness of this limit has to rely on additional information such as conformal invariance. In recent ongoing work, Schramm and Smirnov \cite {ScSm} have proposed an elegant setup to describe the scaling limit as a random object in a nice space, the space of ``crossed quads''.

The other way to describe the large-scale behavior of such models is more implicit. One understands the scaling limits of certain probabilities (or their decay rate). This leads for instance to the identification of the so-called critical exponents for the models that had been predicted by Conformal Field Theory, and that are often directly related to the fractal dimensions of random sets defined by the previous setup.  An example of such a result is the power law for the characteristic length (measuring the mean size of a finite cluster \cite{Ke4})
$$\xi(p) = |p-1/2|^{-\nu+o(1)}$$
as $p \to 1/2$, with $\nu = 4/3$.

In \cite{N1}, we studied the gradient percolation model, an inhomogeneous percolation process where the density of black (occupied) sites depends on the location in space, that was introduced in \cite{Sa1} as an approximation of more complex systems where inhomogeneity plays a central role. Considering a strip of length $\ell_N$ and of finite width $2N$, with a parameter $p(y)$ decreasing linearly along the vertical axis from $1$ to $0$ (see Figure \ref{strip}), we showed that if the length of the strip satisfies $\ell_N \gg N^{\nu/(1+\nu)}$ ($=N^{4/7}$), then with high probability, there exists a unique ``front'', an interface between the cluster of black sites connected to the bottom of the strip and the cluster of white sites connected to the top, and the vertical fluctuations of this front are of order $N^{\nu/(1+\nu)}$. We also proved that various other macroscopic quantities associated with it -- its discrete length for instance -- can be described via critical exponents related to the exponents of standard percolation: it inherits many global properties of the critical percolation interfaces. These results are all results about ``exponents'' that give information about the length, the width etc. of gradient percolation interfaces.

\begin{figure}
\begin{center}
\includegraphics[width=10cm]{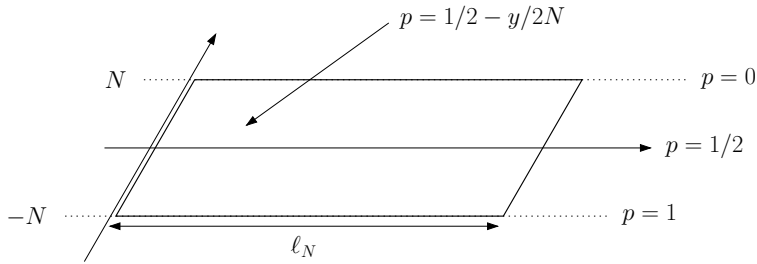}
\caption{\label{strip}Gradient percolation in a strip.}
\end{center}
\end{figure}

Note that in this case, the ``scaling limit'' of the interface that one sees by directly ``looking at the picture'' is a straight line, even though the number of points on the interface does not grow linearly.  Our results however strongly suggested that in order to construct non-trivial scaling limits for the front, one should scale it by a factor of order $N^{4/7}$ -- instead of $N$, for which one just gets a straight line in the limit (see Figure \ref{front_pic}). However, the existence of such scaling limits was not established, mainly due to possible logarithmic corrections in all estimates coming from SLE computations.

\begin{figure}
\begin{center}
\includegraphics[width=12cm]{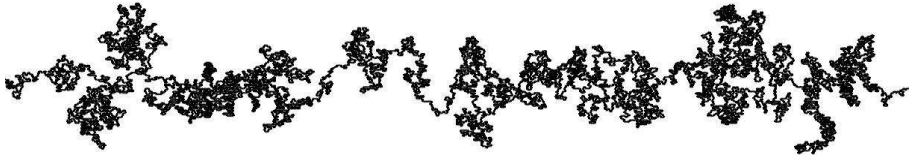}
\caption{\label{front_pic}The gradient percolation interface on a scale (roughly) $N^{4/7}$ -- on a scale of order $N$, one just sees a straight line.}
\end{center}
\end{figure}

The goal of the present paper is to study the scaling limits of the gradient percolation model interfaces, by combining ideas and results of \cite{N1} and \cite{NW}. We identify a -- both horizontal and vertical -- \emph{characteristic length}  $\sigma_N$ for this model, that turns out to be the right way to scale the front. We use this length to tighten results on gradient percolation obtained in \cite{N1}, which allows us to apply tightness arguments due to Aizenman and Burchard to construct non-trivial scaling limits.

We then study some properties of these scaling limits. Our construction shows that they are similar to interfaces in near-critical regime, studied in \cite{NW}. On the one hand, these interfaces share the same exponents as $SLE_6$, that describes limits of interfaces in critical regime. On the other hand, they are not SLE curves exactly: they are rather similar locally to interfaces in \emph{off-critical} regime.

Our results thus indicate that in a situation of inhomogeneity, where one sees self-critical interfaces that are localized where the density is close to the percolation threshold -- i.e. percolation phase transition appears spontaneously -- what we observe corresponds to the \emph{whole near-critical regime} rather than the critical regime exactly. In particular, the interfaces that arise are not \emph{stricto sensu} in the same class of universality as the critical interfaces -- even if global macroscopic quantities stay comparable to what they are at criticality.

\section{Characteristic length for gradient percolation}

\subsection{Percolation background}

This paper uses earlier results on near-critical percolation, primarily from \cite{Ke4, SmW}. In this section, we recall some results that we will use. All these results are stated and derived in \cite{N2}, and we follow the notations of this paper (see also \cite{W2} for a general account on two-dimensional critical percolation). In particular, we restrict ourselves to site percolation on the triangular lattice, with basis $(1,e^{i\pi/3})$, so that the parallelogram of corners $a_i + b_j e^{i\pi/3}$ ($i,j \in \{1,2\}$) is denoted by $[a_1,a_2]\times[b_1,b_2]$, and $\mathcal{C}_H([a_1,a_2]\times[b_1,b_2])$ refers to the existence of a horizontal black crossing in this parallelogram connecting its left side to its right side (we use $\mathcal{C}_V$ for vertical crossings). The notation $f \asymp g$ means that there exist two constants $C_1,C_2 > 0$ such that $C_1 g \leq f \leq C_2 g$, and $f \approx g$ means that ${\log f} / {\log g} \to 1$.

Recall the definition of the ``finite-size scaling'' characteristic length: for any $\epsilon \in (0,1/2)$,
\begin{equation}
L_{\epsilon}(p) = \min\{n \text{\: s.t. \:} \PP_p(\mathcal{C}_H([0,n] \times [0,n])) \leq \epsilon \}
\end{equation}
when $p<1/2$, and the same with white crossings when $p>1/2$ (so that $L_{\epsilon}(p) = L_{\epsilon}(1-p)$). This length measures the scale up to which percolation can be considered as ``almost'' critical, and it happened to be a key tool to study near-critical percolation -- see e.g. \cite{Ke4}.

We use the so-called arm events, more precisely the events
$$A_2 (n_1, n_2) = A_{2,BW} (n_1, n_2) = \{ \partial S_{n_1} \leadsto_{2,BW} \partial S_{n_2} \}$$
that there exist two arms, one black and one white, crossing the annulus $S_{n_1,n_2} = S_{n_2} \setminus \mathring{S}_{n_1}$ of radii $n_1$ and $n_2$ centered on the origin, and also
$$A_4 (n_1, n_2) = A_{4,BWBW} (n_1, n_2) = \{ \partial S_{n_1} \leadsto_{4,BWBW} \partial S_{n_2} \},$$
the similar event with four arms of alternating colors. We also introduce the notation, for $j=2$ or $4$,
$$\pi_j(n_1,n_2) = \PP_{1/2}(A_j (n_1, n_2))$$
(and when $n_1=0$, we choose not to mention it, so that $A_j(n)$ and $\pi_j(n)$ refer to $A_j(0,n)$ and $\pi_j(0,n)$ respectively).

\bigskip

The following properties hold for any fixed $\epsilon \in (0,1/2)$:

\begin{enumerate}
\item \label{RSW_it} The Russo-Seymour-Welsh estimates are valid below $L_{\epsilon}(p)$: for all $k \geq 1$, there exists $\delta_k = \delta_k(\epsilon) >0$ such that for all $p$, all $n \leq L_{\epsilon}(p)$, any product measure $\PPP$ between $\PP_p$ and $\PP_{1-p}$ (i.e. with associated parameters $(\hat{p}_v)$ so that for each site $v$, $\hat{p}_v$ is between $p$ and $1-p$),
\begin{equation}
\PPP(\mathcal{C}_H([0,k n] \times [0,n])) \geq \delta_k.
\end{equation}

\item \label{expdecay_it} The crossing probabilities decay exponentially fast with respect to $L_{\epsilon}(p)$: there exist constants $C_i=C_i(\epsilon)>0$ such that for all $p < 1/2$, all $n$,
\begin{equation} \label{expdecay_eq}
\PP_p(\mathcal{C}_H([0,n] \times [0,2n])) \leq C_1 e^{- C_2 n/L_{\epsilon}(p)}.
\end{equation}

\item \label{quasi_it} We have (this is known as the \emph{quasi-multiplicativity} property) for $j=2$ or $4$
\begin{equation} \label{quasi_prop}
\PPP (A_j (n_1 / 2)) \times \PPP (A_j (2 n_1, n_2)) \asymp \PPP (A_j (n_2))
\end{equation}
uniformly in $p$, $2 n_1 \leq n_2 \leq L_{\epsilon}(p)$ and $\PPP$ between $\PP_p$ and $\PP_{1-p}$, and also
\begin{equation} \label{near_critical_2arms}
\PPP (A_j (n_1,n_2)) \asymp \PP_{1 / 2} (A_j (n_1,n_2)).
\end{equation}

\item \label{arm_it} For $j=2$ or $4$, for any $\eta \in (0,1)$,
\begin{equation}
\PP_{1 / 2} (A_j (\eta n,n)) \to f_j (\eta)
\end{equation}
as $n \to \infty$, where $f_j (\eta ) = \eta^{\alpha_j + o(1)}$ as $\eta \to 0^+$, with $\alpha_2=1/4$ and $\alpha_4=5/4$. This implies (using Eq.(\ref{quasi_prop})) that
\begin{equation} \label{arm_exponent}
\pi_j(n) = \PP_{1 / 2} (A_j (n)) \approx n^{- \alpha_j}
\end{equation}
as $n \to \infty$.
\end{enumerate}

\subsection{Regularity and asymptotic behavior of $L_{\epsilon}$}

We will need precise estimates on the behavior of the characteristic length $L_{\epsilon}(p)$ as $p \to 1/2$. Let us first recall results that are derived in \cite{N2}. For any (fixed) $\epsilon \in (0,1/2)$, we have that
\begin{equation} \label{equiv}
|p-1/2| L_{\epsilon}(p)^2 \pi_4(L_{\epsilon}(p)) \asymp 1
\end{equation}
as $p \to 1/2$.

\begin{itemize}
\item Using Eq.(\ref{arm_exponent}) with $j=4$, this relation implies that for any $\epsilon \in (0,1/2)$,
\begin{equation} \label{exponent}
L_{\epsilon}(p) \approx |p - 1/2|^{-\nu}
\end{equation}
as $p \to 1/2$, with $\nu = 4/3$.

\item By combining this relation with an a-priori bound for $\pi_4$, namely that
\begin{equation} \label{4arms_apriori}
\pi_4(n_1,n_2) \geq c (n_1/n_2)^{2-\alpha}
\end{equation}
for any $n_1 < n_2$ ($c, \alpha > 0$ being universal constants), we also obtain that for any two $\epsilon,\epsilon' \in (0,1/2)$,
\begin{equation} \label{nonrelevance}
L_{\epsilon}(p) \asymp L_{\epsilon'}(p).
\end{equation}
\end{itemize}

In the following, we will need to compare $L_{\epsilon}(1/2+\delta)$ and $L_{\epsilon}(1/2+\delta')$ for two small values $\delta, \delta'>0$. We could write
$$\frac{L_{\epsilon}(1/2+\delta)}{L_{\epsilon}(1/2+\delta')} \approx \bigg( \frac{\delta}{\delta'}\bigg)^{-4/3},$$
but this logarithmic equivalence is not precise enough. We bypass this difficulty by deriving a weaker result that is sufficient for our purpose.

\begin{lemma} \label{regularity_lem}
There exist universal constants $C_1, C_2, \alpha_1, \alpha_2 >0$ such that
\begin{equation}
C_1 \bigg( \frac{\delta}{\delta'}\bigg)^{-\alpha_1} \leq \frac{L_{\epsilon}(1/2+\delta)}{L_{\epsilon}(1/2+\delta')} \leq C_2 \bigg( \frac{\delta}{\delta'}\bigg)^{-\alpha_2}
\end{equation}
for any two $0 < \delta < \delta' < 1/2$.
\end{lemma}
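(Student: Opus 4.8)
The plan is to start from the exact two-variable relation \eqref{equiv}, which at the two parameters $p = 1/2+\delta$ and $p' = 1/2+\delta'$ reads $\delta\, L^2\, \pi_4(L) \asymp 1$ and $\delta'\, (L')^2\, \pi_4(L') \asymp 1$, where I abbreviate $L = L_{\epsilon}(1/2+\delta)$ and $L' = L_{\epsilon}(1/2+\delta')$. Dividing these two relations gives
$$\frac{\delta}{\delta'}\cdot\frac{L^2}{(L')^2}\cdot\frac{\pi_4(L)}{\pi_4(L')} \asymp 1,$$
so that, writing $r = L/L'$ (note $r \geq 1$ since $\delta < \delta'$ and $L_{\epsilon}$ is non-increasing in $|p-1/2|$),
$$r^2\,\frac{\pi_4(L)}{\pi_4(L')} \asymp \frac{\delta'}{\delta}.$$
The whole problem is thus reduced to controlling the ratio $\pi_4(L)/\pi_4(L')$ by a power of $r$, from above and from below.

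For this, I would first invoke quasi-multiplicativity \eqref{quasi_prop} at $p=1/2$ to factor $\pi_4(L) \asymp \pi_4(L')\,\pi_4(L',L)$, which turns the ratio into the single-annulus quantity $\pi_4(L)/\pi_4(L') \asymp \pi_4(L',L)$. I then sandwich $\pi_4(L',L) = \pi_4(L',rL')$ between two powers of $r$. The lower bound is exactly the a-priori estimate \eqref{4arms_apriori}, giving $\pi_4(L',L) \geq c\, r^{-(2-\alpha)}$. For the matching upper bound I need $\pi_4(L',L) \leq C\, r^{-\beta}$ for some universal $\beta \in (0,2)$; this is not stated in the excerpt but follows from the Russo-Seymour-Welsh estimates of item \ref{RSW_it} (valid at all scales since $L_{\epsilon}(1/2)=\infty$) by the standard argument that in each dyadic sub-annulus the four-arm event fails with probability bounded away from $0$, and that these events are independent across disjoint annuli, so the probability decays geometrically in the number $\asymp \log r$ of scales.

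Combining the three displays yields $c\,r^{\alpha} \leq r^2\,\pi_4(L',L) \asymp \delta'/\delta \leq C\,r^{2-\beta}$. Solving the left inequality for $r$ gives $r \leq C'(\delta'/\delta)^{1/\alpha}$, and solving the right one (using $2-\beta>0$) gives $r \geq c'(\delta'/\delta)^{1/(2-\beta)}$; rewriting $\delta'/\delta = (\delta/\delta')^{-1}$ then produces exactly the claimed two-sided bound with $\alpha_2 = 1/\alpha$ and $\alpha_1 = 1/(2-\beta)$, both universal.

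The main technical point to watch is twofold. First, one must supply the polynomial upper bound on $\pi_4$ discussed above, since the excerpt only records the lower a-priori bound; this is where I expect the only real work, although it is a routine RSW-plus-independence argument. Second, the relation \eqref{equiv} is only asserted in the limit $p \to 1/2$, so strictly it holds for $\delta,\delta'$ below some threshold $\delta_0$. I would therefore prove the lemma first in the regime $\delta < \delta' \leq \delta_0$ as above, and then dispose of the remaining cases by inserting the intermediate point $\delta_0$ and using that, for parameters bounded away from $1/2$, the quantities $L_{\epsilon}$ and $\delta,\delta'$ all lie in a bounded range, so the corresponding factors can be absorbed into the constants $C_1,C_2$.
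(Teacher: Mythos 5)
Your proof is correct and follows the same skeleton as the paper's: divide the two instances of Eq.(\ref{equiv}) at $1/2+\delta$ and $1/2+\delta'$, use quasi-multiplicativity (item \ref{quasi_it}) to reduce the ratio $\pi_4(L)/\pi_4(L')$ to the single-annulus quantity $\pi_4(L',L)$, and then bound $\pi_4(L',L)$ from both sides by powers of $r = L/L'$, with the a-priori estimate Eq.(\ref{4arms_apriori}) giving one direction. The only place you diverge is the other direction: you propose to prove a polynomial decay $\pi_4(L',L) \leq C r^{-\beta}$ via an RSW blocking-circuit argument over dyadic sub-annuli, and you flag this as ``where I expect the only real work.'' The paper sidesteps that work entirely: it uses the trivial bound $\pi_4(L',L) \leq 1$, which already gives $r^2 \geq c\, \delta'/\delta$ and hence the lower bound with $\alpha_1 = 1/2$. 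Your stronger estimate is valid (it is the same standard circuit argument that underlies the hypothesis (H1) bound invoked later in the tightness proof) and yields a slightly larger exponent $\alpha_1 = 1/(2-\beta)$, but since the lemma only asserts the existence of some universal exponents, this extra lemma is unnecessary. A minor point in your favor: you explicitly handle the fact that Eq.(\ref{equiv}) is stated only as $p \to 1/2$, by treating the regime $\delta' \leq \delta_0$ first and absorbing the bounded remaining cases into the constants; the paper's proof passes over this in silence.
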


\begin{proof}
The proof is essentially the same as that of Eq.(\ref{nonrelevance}). We know from Eq.(\ref{equiv}) that
\begin{equation}
\delta \big(L_{\epsilon}(1/2+\delta)\big)^2 \pi_4(L_{\epsilon}(1/2+\delta)) \asymp 1 \asymp \delta' \big(L_{\epsilon}(1/2+\delta')\big)^2 \pi_4(L_{\epsilon}(1/2+\delta')),
\end{equation}
hence,
\begin{equation}
\frac{\big(L_{\epsilon}(1/2+\delta)\big)^2}{\big(L_{\epsilon}(1/2+\delta')\big)^2} \asymp \frac{\pi_4(L_{\epsilon}(1/2+\delta'))}{\pi_4(L_{\epsilon}(1/2+\delta))} \times \frac{\delta'}{\delta}.
\end{equation}
We deduce that
\begin{equation}
\frac{\big(L_{\epsilon}(1/2+\delta)\big)^2}{\big(L_{\epsilon}(1/2+\delta')\big)^2} \asymp \big( \pi_4(L_{\epsilon}(1/2+\delta'),L_{\epsilon}(1/2+\delta)) \big)^{-1} \times \frac{\delta'}{\delta}
\end{equation}
by quasi-multiplicativity (item \ref{quasi_it}. above, in the case of $j=4$ arms). The estimate for 4 arms Eq.(\ref{4arms_apriori}), and also the trivial bound $\pi_4(n_1,n_2) \leq 1$, now provide the desired conclusion.
\end{proof}

\subsection{Gradient percolation: setup}

Let us now define the gradient percolation model itself: we consider independent site percolation in the strip
\begin{equation}
\Sca^{\infty} = (-\infty,+\infty) \times [-N,N],
\end{equation}
with parameter
\begin{equation}
p(y) = \frac{1}{2}-\frac{y}{2N}.
\end{equation}
For our purpose, working with such a strip infinite in both directions will be more convenient: we get in this way a stationary process, and we avoid the boundary effects.

Note that with this setting, there is a.s. a unique interface between the infinite cluster of black sites connected to $(-\infty,+\infty) \times \{-N\}$ and the infinite cluster of white sites connected to $(-\infty,+\infty) \times \{N\}$: indeed, there exists a column consisting -- except for the top site -- only of black sites, and the interface can be explored starting from the top of this column. We refer to this (random) interface as the \emph{front}, and we denote it by $\FF_N$. We also introduce the sub-strips
\begin{equation}
\Sca^{t_1,t_2} = [t_1,t_2] \times [-N,N] \subseteq \Sca^{\infty}
\end{equation}
for $- \infty < t_1 < t_2 < +\infty$, and
\begin{equation}
[u_1,u_2] = (-\infty,+\infty) \times [u_1,u_2].
\end{equation}
We use in particular the notation $[\pm u] = [-u,u] = (-\infty,+\infty) \times [-u,u]$.

\subsection{Characteristic length for gradient percolation}

We now introduce a quantity $\sig$ that measures the vertical fluctuations of the front. As will become clear in the following, this quantity can be seen as a -- both horizontal and vertical -- characteristic length for the gradient percolation model.
\begin{definition}
For any $\epsilon  \in (0,1/2)$, any $N \geq 1$, we define
\begin{equation}
\sig = \sup \bigg\{ \sigma \text{ s.t. } L_{\epsilon}\big(p(\sigma)\big) = L_{\epsilon}\bigg(\frac{1}{2}-\frac{\sigma}{2N}\bigg) \geq \sigma\bigg\}.
\end{equation}
\end{definition}

Note that if we plug into this definition the value of the exponent $\nu$ associated with $L_{\epsilon}$ (i.e. Eq.(\ref{exponent})), we get that
\begin{equation}
\sig \approx N^{\nu/(1+\nu)} = N^{4/7}
\end{equation}
as $N \to \infty$. This implicit definition makes life easier compared to the closed value $N^{4/7}$ that we used in \cite{N1}, for which we had to take care of potential logarithmic corrections.

Eq.(\ref{nonrelevance}) implies that
\begin{equation}
\sig \asymp \sigma_N^{\epsilon'}
\end{equation}
for any two $\epsilon,\epsilon' \in (0,1/2)$: the scale $\sig$ is unique up to multiplicative constants. We thus fix some $\epsilon \in (0,1/2)$ (e.g. $\epsilon=1/4$) for the rest of the paper. We will see in the next section that $\sig$ is the right scale to consider.

Note that by definition of $\sig$, we have
\begin{equation} \label{comparable_p}
L_{\epsilon}(p(\sig)) \asymp \sig.
\end{equation}
On the one hand, we know that for any fixed $u \geq 1$, the Russo-Seymour-Welsh lower bounds hold in the strip $[\pm u \sig]$, uniformly for $N \geq 1$. On the other hand, the previous regularity lemma for $L_{\epsilon}$ (Lemma \ref{regularity_lem}) implies that for any $u \geq 1$,
\begin{equation}
L_{\epsilon}\big(p(u \sig)\big) = L_{\epsilon}\bigg(\frac{1}{2}-\frac{u \sig}{2N}\bigg) \leq C_1^{-1} u^{-\alpha_1} L_{\epsilon}\bigg(\frac{1}{2}-\frac{\sig}{2N}\bigg) = C_1^{-1} u^{-\alpha_1} L_{\epsilon}\big(p(\sig)\big),
\end{equation}
so that using Eq.(\ref{comparable_p}),
\begin{equation} \label{decay_p}
L_{\epsilon}\big(p(u \sig)\big) \leq c u^{-\alpha} \sig
\end{equation}
for some universal constants $c,\alpha>0$ (we would expect that $\alpha=4/3$, but one has to be careful with the logarithmic corrections). We will actually need Eq.(\ref{decay_p}) also for $u \in [1/2,1]$: this is again a consequence of Lemma \ref{regularity_lem} (just increase the constant $c$ if necessary).

\subsection{Macroscopic properties of the front}

The definition of $\sig$ enables us to tighten the estimates of \cite{N1}. The proofs are essentially the same, we recall them briefly since some non-trivial adaptations are needed. Our reasonings are based on the following two main observations:
\begin{itemize}
\item The front $\FF_N$ never goes far from the strip $[\pm \sig]$, due to the exponential decay property Eq.(\ref{expdecay_eq}).

\item The behavior of $\FF_N$ in any strip $[\pm u \sig]$ ($u \geq 1$) is roughly the same as that of critical percolation.
\end{itemize}

\subsubsection*{Localization}

\begin{proposition} \label{local}
There exist some universal constants $\alpha, C_1, C_2 > 0$ such that
\begin{equation} \label{loc1}
\PP(\FF_N \cap \Sca^{0, t \sig} \nsubseteq [\pm u \sig]) \leq C_1 \bigg( \frac{t}{u} + \frac{u}{t} \bigg) e^{- C_2 u^{\alpha}}
\end{equation}
for all $t, u \geq 1$.
\end{proposition}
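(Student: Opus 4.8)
The plan is to reduce, by symmetry, to controlling the upward excursions of $\FF_N$, to translate a high excursion into a black crossing event living in the region where black is subcritical, and then to feed the exponential decay estimate Eq.(\ref{expdecay_eq}) together with Eq.(\ref{decay_p}) into a union bound over a covering of the window $\Sca^{0,t\sig}$. This is the localization argument of \cite{N1}; the only genuinely new point is that the implicit scale $\sig$ and the bound Eq.(\ref{decay_p}) replace the explicit power $N^{4/7}$, which is what removes the logarithmic corrections. First I would use the symmetry $p(-y) = 1-p(y)$: the law of the configuration is invariant under the reflection $y \mapsto -y$ composed with the color swap, and this map exchanges $\{\FF_N \text{ rises above } u\sig \text{ at some abscissa in } [0,t\sig]\}$ with $\{\FF_N \text{ descends below } -u\sig \text{ there}\}$. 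Hence it suffices to bound the probability $\PP(E_+)$ of the first event and to double the result.

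Next comes the crossing characterization. If $E_+$ occurs, some point of the front lies at height $>u\sig$ with abscissa $x_0 \in [0,t\sig]$, and on its lower side sits the cluster of black sites connected to the bottom of the strip. By planar duality on the triangular lattice, the absence of such a high black excursion amounts to a \emph{blocking} white left--right crossing lying below height $u\sig$; read the other way, $E_+$ forces a black connection from the near-critical band (height of order $\sig$) up to height $u\sig$, living in the region $[\sig, u\sig]$ where black is subcritical. The essential feature is that above height $\sig$ the black clusters are subcritical with a rapidly shrinking characteristic length, so near its top this black connection is confined horizontally to within $O(\sig)$ of $x_0$, and crossing the subcritical region is exponentially costly.

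To quantify the cost I would use Eq.(\ref{expdecay_eq}). Since $p(y) \le 1/2$ is non-increasing in $y$ and $L_{\epsilon}(p)$ is non-increasing in $|p-1/2|$, Eq.(\ref{decay_p}) gives $L_{\epsilon}(p(y)) \le L_{\epsilon}(p(u\sig)) \le c\,u^{-\alpha}\sig$ for all $y \ge u\sig$. Feeding this smallest characteristic length into Eq.(\ref{expdecay_eq}), a black crossing in the hard direction of a box of size of order $\sig$ placed at height of order $u\sig$ has probability at most $C_1 e^{-C_2\,\sig/(c\,u^{-\alpha}\sig)} = C_1 e^{-C_2' u^{\alpha}}$, which (after renaming $\alpha$) produces the factor $e^{-C_2 u^{\alpha}}$. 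The prefactor then comes from covering, in two regimes. When $t \ge u$, I cover the abscissa window $[0,t\sig]$ by $O(t/u)$ intervals of length of order $u\sig$; if the front reaches $u\sig$ at $x_0$ in the $i$-th interval, the confinement above forces a black crossing of a box of size of order $u\sig$ sitting in the subcritical region over the enlarged $i$-th interval, of probability $\le e^{-C_2 u^{\alpha}}$, so a union bound gives $(t/u)\,e^{-C_2 u^{\alpha}}$. When $t \le u$, the window has width $t\sig \le u\sig$ and a single box of width of order $u\sig$ already covers it together with the $O(\sig)$ horizontal spread of the excursion, yielding the stronger bound $e^{-C_2 u^{\alpha}} \le (u/t)\,e^{-C_2 u^{\alpha}}$. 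Adding the two contributions bounds $\PP(E_+)$, hence Eq.(\ref{loc1}), by $C_1(t/u + u/t)\,e^{-C_2 u^{\alpha}}$ for all $t,u \ge 1$.

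I expect the main obstacle to be the horizontal confinement in the crossing characterization: one must argue rigorously that the high black excursion, and the dual crossing it produces, stays within a width comparable to the covering box around the abscissa $x_0$, so that the covering is faithful and the union bound legitimate. This is precisely where the subcriticality above height $\sig$ (the shrinking characteristic length) is used, and where the monotonicity of $L_{\epsilon}$ and Eq.(\ref{decay_p}) must be invoked with care to upgrade the fixed-$p$ estimate Eq.(\ref{expdecay_eq}) to the gradient setting, in which $p(y)$ varies continuously across the band where the crossing lives.
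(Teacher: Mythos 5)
Your overall strategy---symmetrizing so that only upward exits need to be controlled, converting a high excursion of $\FF_N$ into black crossing events in the region where black is subcritical, and feeding Eq.(\ref{decay_p}) into the exponential decay estimate Eq.(\ref{expdecay_eq}) via a union bound over covering boxes, with separate regimes $t \geq u$ and $t < u$---is exactly the paper's. But the step you yourself flag as ``the main obstacle'', the horizontal confinement of the excursion, is not a technicality to be checked at the end: it is the actual content of the proof, and the statement you would need (that the black connection from the near-critical band up to height $u\sig$ stays within $O(\sig)$, or within the covering box, of the abscissa $x_0$) is false as an event inclusion. The black arm from $(x_0,u\sig)$ down to the bottom of the strip may drift horizontally an arbitrary distance before descending; worse, if $\FF_N \cap \Sca^{0,t\sig}$ lies entirely above $\frac{u}{2}\sig$, the descent to the near-critical band can take place entirely \emph{outside} the window $[0,t\sig]$, in which case none of your covering boxes contains a vertical crossing at all and your union bound misses the event completely.

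The paper replaces your confinement claim by a dichotomy: either the front in the window stays above $y=\frac{u}{2}\sig$, and then (being a bi-infinite curve) it crosses $[0,t\sig]\times[\frac{u}{2}\sig,N]$ horizontally; or it crosses $[0,t\sig]\times[\frac{u}{2}\sig,u\sig]$ vertically. Each alternative is then decomposed into box crossings in \emph{both} orientations---the horizontal-crossing boxes are precisely what charges a path for drifting sideways by more than one box width, so that ``confinement'' is enforced probabilistically rather than assumed. Moreover, the first alternative lives in a region of height $N \gg u\sig$, so one must sum over boxes at all heights $(\frac{u}{2}+ku)\sig$, $k \geq 0$; this sum converges uniformly in $N$ only because Eq.(\ref{decay_p}) makes the decay rate improve with height, giving terms of order $e^{-C(k+\frac{1}{2})^{\alpha}u^{\alpha}}$. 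Your single box ``at height of order $u\sig$'' does not cover crossings occurring higher up, and bounding all heights by the rate at $u\sig$ (say by stochastic domination by $\PP_{p(u\sig/2)}$) would cost a prefactor of order $N/(u\sig)$ rather than $t/u+u/t$. Relatedly, in the regime $t<u$ the factor $u/t$ in the paper is a genuine count (boxes of side $t\sig$ stacked vertically, of order $u/t$ per band of height $u\sig$), not the throwaway inequality $1 \leq u/t$. In short, the proposal assembles the correct ingredients but leaves the central combinatorial-geometric step unproved, and the shortcut offered in its place would not close the gap.
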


\begin{proof}
Assume that $\FF_N \cap \Sca^{0, t \sig} \nsubseteq [\pm u \sig]$. If for instance $\FF_N \cap \Sca^{0, t \sig}$ exits the strip $[\pm u \sig]$ from above, then either it stays above the line $y=\frac{u}{2} \sig$ and crosses horizontally the strip $[0,t \sig] \times [\frac{u}{2} \sig, N]$, either it crosses vertically the strip $[0,t \sig] \times [\frac{u}{2} \sig,u \sig]$.

Consider the first case: the event $\mathcal{C}_H([0,t \sig] \times [\frac{u}{2} \sig, N])$ occurs. If $t \geq u$, there is a horizontal crossing in one of the parallelograms $[0,u \sig] \times [(\frac{u}{2} + ku) \sig, (\frac{u}{2} + (k+2)u) \sig]$ ($k=0,1, \ldots$), or a vertical crossing in one of the parallelograms $[0,2 u \sig] \times [(\frac{u}{2} + k'u) \sig, (\frac{u}{2} + (k'+1)u) \sig]$ ($k'=0,1, \ldots$). Using the exponential decay property Eq.(\ref{expdecay_eq}), we get
\begin{align*}
\PP\bigg(\mathcal{C}_H\Big([0,u \sig] \times \Big[\Big(\frac{u}{2} + ku\Big) \sig, \Big(\frac{u}{2} + (k+2)u\Big) \sig\Big]\Big)\bigg) & \\
& \hspace{-3cm} \leq C_1 e^{- C_2 u \sig/L_{\epsilon}(p((\frac{u}{2} + ku) \sig))} \\
& \hspace{-3cm} \leq C_1 e^{- C_3 (k+\frac{1}{2})^{\alpha} u^{\alpha}}
\end{align*}
since $L_{\epsilon}(p((\frac{u}{2} + ku) \sig)) \leq c ((k+\frac{1}{2}) u)^{-\alpha} \sig$ (by Eq.(\ref{decay_p})). The same bound holds for $\PP(\mathcal{C}_V([0,2 u \sig] \times [(\frac{u}{2} + k'u) \sig, (\frac{u}{2} + (k'+1)u) \sig]))$, and by summing over $k,k' \geq 0$, we get that the considered event has a probability at most
\begin{align*}
2 \sum_{k''=1}^{\infty}  C_1 e^{- C_3 2^{-\alpha} k''^{\alpha} u^{\alpha}} & \leq 2 C_1 e^{- C_4 u^{\alpha}} \sum_{k''=1}^{\infty}  e^{- C_4 (k''^{\alpha} - 1)}\\
& \leq C_5 \frac{t}{u} e^{- C_4 u^{\alpha}},
\end{align*}
by using that $u^{\alpha} \geq 1$ and $t/u \geq 1$.

If $t < u$, we use parallelograms of size $t$ instead, and the considered event has a probability at most:
\begin{align*}
2 \sum_{k=0}^{\infty} C_1 e^{- C_3 (\frac{u}{2}+k t)^{\alpha}} & \leq C_6 \frac{u}{t} \sum_{l=1}^{\infty} e^{- C_7 l^{\alpha} u^{\alpha}}\\
& \leq C_8 \frac{u}{t} e^{- C_7 u^{\alpha}},
\end{align*}
since for each $l' \geq 0$, there are of order $u/t$ values of $k$ for which $(l'+\frac{1}{2}) u \leq \frac{u}{2}+k t < (l'+\frac{3}{2}) u$.

Consider now the second case: the event $\mathcal{C}_V([0,t \sig] \times [\frac{u}{2} \sig,u \sig])$ occurs. If $t \geq u$, there is a vertical crossing in one of the parallelograms $[k u \sig, (k+1) u \sig] \times [\frac{u}{2} \sig,u \sig]$ ($0 \leq k u \leq t$), or a horizontal crossing in one of the parallelograms $[k' \frac{u}{2} \sig, (k'+1) \frac{u}{2} \sig] \times [\frac{u}{2} \sig, 3 \frac{u}{2} \sig]$ ($0 \leq k'\frac{u}{2} \leq t$). There are of order $t/u$ such parallelograms, and for each of them there is a crossing with probability at most
$$C_1 e^{- C_2 \frac{u}{2} \sig /L_{\epsilon}(p(\frac{u}{2} \sig))} \leq C_1 e^{-C_3 u^{\alpha}}.$$
The considered event has thus a probability at most
$$C_4 \frac{t}{u} e^{-C_3 u^{\alpha}}.$$
If $t<u$, we use the parallelogram $[0, t \sig] \times [\frac{u}{2} \sig,u \sig]$: it is crossed vertically with probability at most
$$C_1 \frac{u}{t} e^{-C_3 u^{\alpha}},$$
since $u/t \geq 1$.
\end{proof}

\begin{remark}
In the other direction, let us fix some $u\geq 1$. For $t \geq u$, we can consider the independent parallelograms $[k u \sig,(k+1) u \sig] \times [-u \sig,u \sig]$ ($0 \leq k \leq \frac{t}{u}-1$): in each of them, a vertical black crossing occurs with probability at least $\delta_2 = \delta_2(u) > 0$, so that
\begin{equation} \label{loc2}
\PP(\FF_N \cap \Sca^{0, t \sig} \subseteq [\pm u \sig]) \leq (1 - \delta_2)^{t/u-1} \leq C_1 e^{- C_2 t},
\end{equation}
for some constants $C_1,C_2 >0$ (depending on $u$).
\end{remark}

\subsubsection*{Uniqueness}

\begin{proposition} \label{uniq}
There exist universal constants $C_1, C_2 >0$ such that
\begin{equation}
\PP(\text{ there is a unique crossing in $\Sca^{0,t \sig}$ }) \geq 1 - C_1 e^{-C_2 t}.
\end{equation}
\end{proposition}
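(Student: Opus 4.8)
The plan is to reduce the uniqueness statement to two events that are each controlled by the tools already assembled: the localization of the front (Proposition \ref{local}) and an RSW-type ``closing'' construction. First I would observe that a crossing of the strip $\Sca^{0,t\sig}$ from its left side to its right side is, in the present setting, essentially a horizontal traversal of the front region; the front $\FF_N$ provides one such crossing, and the question is whether a \emph{second} disjoint crossing can coexist. The key heuristic is that two disjoint crossings would have to be separated by either a black or a white path going the ``other way'' (vertically), and such a separating path is expensive because the front is pinned near the strip $[\pm\sig]$ where the density is critical, while outside a strip $[\pm u\sig]$ the density is sufficiently off-critical that long crossings decay exponentially.

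The main decomposition I would use is the following. Fix $u\geq 1$ (to be optimized, e.g. $u$ a small multiple of $t$ or a constant, depending on how the constants fall out). Write the complement of the ``unique crossing'' event as the union of: (i) the event that $\FF_N\cap\Sca^{0,t\sig}\nsubseteq[\pm u\sig]$, the front leaving the localized strip, and (ii) the event that, \emph{while} the front stays inside $[\pm u\sig]$, there is nonetheless a second crossing of $\Sca^{0,t\sig}$ disjoint from the first. For term (i) I would invoke Proposition \ref{local} directly: with $t$ and $u$ of comparable order (say $u=t$, or $u=\lambda t$ for a fixed small $\lambda$), Eq.(\ref{loc1}) gives a bound of the form $C(t/u+u/t)e^{-C_2 u^\alpha}\leq C' e^{-C_2'' t^\alpha}$, which is even stronger than the claimed $e^{-C_2 t}$. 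For term (ii), two disjoint crossings inside a bounded-width strip of horizontal length $t\sig$ force, by a standard duality/separation argument, the existence of a ``dual'' crossing in the transverse direction somewhere along the strip — concretely, between any two disjoint horizontal black crossings there must be a white path connecting top to bottom (and vice versa). I would tile the horizontal extent $[0,t\sig]$ into of order $t/u$ blocks of width $u\sig$ and observe that a transverse crossing of one block, within the near-critical strip $[\pm u\sig]$, must be realized by crossing an annulus or box in which a $2$-arm (black/white) event occurs; applying the uniform RSW lower bounds (item \ref{RSW_it}, valid below $L_\epsilon(p)\asymp\sig$) shows each block \emph{fails} to admit an unwanted transverse separation with probability bounded below by a constant, and independence across the disjoint blocks yields a geometric bound $(1-\delta)^{t/u}\leq C_1 e^{-C_2 t}$.

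The cleanest route, and the one I would actually carry out, mirrors the localization proof: within each horizontal block $[ku\sig,(k+1)u\sig]\times[-u\sig,u\sig]$ I would demand a specific configuration (for instance two black vertical crossings straddling the front, or equivalently the RSW event forcing the crossing to be ``channeled'' so that no second disjoint crossing survives), each occurring with probability at least some $\delta=\delta(u)>0$ by item \ref{RSW_it}; since the blocks are disjoint they are independent, and the probability that \emph{all} of them fail to channel the front is at most $(1-\delta)^{\lfloor t/u\rfloor}$. Summing the contributions of (i) and (ii) gives the stated bound with universal $C_1,C_2>0$.

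I expect the main obstacle to be term (ii): making precise the duality statement that ``a second disjoint crossing forces a transverse arm event'' and then ensuring that the transverse crossing is confined to the near-critical strip where RSW applies uniformly. The subtlety is that the front itself is only pinned near $[\pm\sig]$ \emph{with high probability}, so one must condition on (or intersect with) the localization event from term (i) to guarantee that the relevant boxes lie below $L_\epsilon(p)$, where the uniform RSW bounds of item \ref{RSW_it} are available; outside this strip the measure is off-critical and RSW would fail. Handling this conditioning cleanly — so that the independence across blocks used in the geometric bound is not destroyed — is where the care is needed, and it is exactly the point at which the characteristic length $\sig$, rather than the bare value $N^{4/7}$, makes the argument go through without logarithmic losses.
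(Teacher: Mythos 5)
Your outer shell --- disjoint blocks along the strip, independence, and a geometric bound of the form $(1-\delta)^{t/u}$ --- is indeed the skeleton of the paper's proof, but the content of the block event is where the proof lives, and there your proposal has a genuine gap. Uniqueness of the crossing in $\Sca^{0,t\sig}$ is a statement about the \emph{full} strip of height $2N$: it fails as soon as some black left-right crossing of the sub-strip is not connected to the bottom side $y=-N$ inside it (equivalently, some white crossing is not connected to the top). No event confined to $[\pm u\sig]$ can certify this: even on your ``channeling'' event, a white left-right crossing could sneak below $-u\sig$, and ruling that out for fixed $u$ costs of order $(t/u)\,e^{-cu^{\alpha}}$, which \emph{grows} linearly in $t$. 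This is also why your decomposition admits no good choice of $u$: with $u$ fixed, term (i) from Proposition \ref{local} is itself $\asymp t\,e^{-cu^{\alpha}}$ and does not decay in $t$; with $u\asymp t$, the number of blocks is $O(1)$ and, worse, the block probability $\delta(u)$ tends to $0$, since uniform-in-$N$ RSW constants are only available on scale $L_{\epsilon}\big(p(u\sig)\big)\leq cu^{-\alpha}\sig$ (Eq.(\ref{decay_p})), so any event spanning height $u\sig$ must cross off-critical regions and has probability decaying in $u$; intermediate choices $u=t^{\beta}$ fail for the same reason. (Your parenthetical claim that $e^{-Ct^{\alpha}}$ is ``even stronger'' than $e^{-Ct}$ also silently assumes $\alpha\geq 1$, which Lemma \ref{regularity_lem} does not provide.) The missing idea is the paper's Lemma \ref{vertical_crossing}: a black vertical crossing from the bottom $y=-N$ all the way up to the critical region (and symmetrically a white one from $y=+N$ down) has probability bounded below \emph{uniformly in $N$}, because away from the critical strip these crossings are built in the favorable direction, so the block-by-block costs $1-Ce^{-ck^{\alpha}}$ form a convergent product. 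This is what allows each sub-strip of fixed length $3\sig$ to carry a full-height certifying event of constant probability $\delta'$, giving $(1-\delta')^{ct}\leq C_1e^{-C_2t}$ directly, with no recourse to localization at all.

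Two further problems. First, your duality statement is wrong: two disconnected black left-right crossings of a strip are separated by a \emph{white left-right crossing}, not by a white top-to-bottom path (a top-to-bottom white path would preclude every black horizontal crossing); relatedly, the logic in your first paragraph --- bounding from below the probability that a block ``fails to admit a transverse separation'' --- does not bound the bad event, since a separation in a single block would then suffice and would have to be shown \emph{unlikely}, not likely to fail. The correct certifying event, as in the paper, is that the uppermost black crossing $r_N$ of the sub-strip is joined to $y=-N$ by a black path inside the sub-strip; this forces every white crossing to lie above $r_N$ and every black crossing into the bottom cluster, hence a unique interface. Second, the conditioning/independence difficulty you flag at the end is real and is left unresolved in your proposal; the paper resolves it by conditioning on $r_N$ (measurable with respect to the configuration above it), forcing $r_N$ below the $x$-axis by an independent white vertical crossing of $[\sig,2\sig]\times[0,N]$, and then performing the downward gluing ($\delta_4^4$ RSW crossings plus Lemma \ref{vertical_crossing}) entirely in the still-unexplored region below $r_N$, so that disjoint sub-strips decouple.
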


\begin{proof}

We need the following property: we state it as a separate lemma since we will use it again later.
\begin{lemma} \label{vertical_crossing}
For each fixed $v_1 >0$ and $v_2$, there exists a constant $\eta(v_1,v_2) > 0$ independent of $N$ such that
\begin{equation}
\PP\big(\mathcal{C}_V([0,v_1\sig] \times [-N,v_2 \sig])\big) \geq \eta(v_1,v_2).
\end{equation}
\end{lemma}

\begin{proof}
We construct a vertical crossing in $[0,v_1\sig] \times [-N,v_2 \sig]$ by combining vertical crossings in the parallelograms $[0,v_1\sig] \times [(v_2-(k+2)v_1)\sig,(v_2-k v_1)\sig]$ ($k=0,1, \ldots$) and horizontal crossings in the parallelograms $[0,v_1\sig] \times [(v_2-(k'+1)v_1)\sig,(v_2-k'v_1)\sig]$ ($k'=1,2, \ldots$). It is easy to check that with probability at least
\begin{equation}
C_1(v_1,v_2) \prod_{k=k_0}^{\infty} (1 - C_2 e^{- C_3 k^{\alpha}}) = \eta(v_1,v_2) > 0,
\end{equation}
all these crossings exist, using (as for localization) the exponential decay property Eq.(\ref{expdecay_eq}) and Eq.(\ref{decay_p}).
\end{proof}

Consider now the strip $\Sca^{0, 3 \sig}$. Let us condition on the upper-most crossing $r_N$ in this strip. We know from Lemma \ref{vertical_crossing} that
\begin{equation}
\PP\big(\mathcal{C}^*_V([\sig,2\sig] \times [0,N])\big) \geq \eta = \eta(1,0) > 0,
\end{equation}
a lowest point on $r_N$ in $\Sca^{\sig, 2 \sig}$ thus lies below the $x$-axis with probability at least $\eta$: let us assume that this is the case. The construction of Figure \ref{uniqueness} then shows that with probability at least $\delta_4^4 \eta'$, with $\eta' = \eta(1/10,0)$, $r_N$ is connected to the bottom of the strip $\Sca^{0, 3 \sig}$, by a path staying in this strip.

\begin{figure}
\begin{center}
\includegraphics[width=10cm]{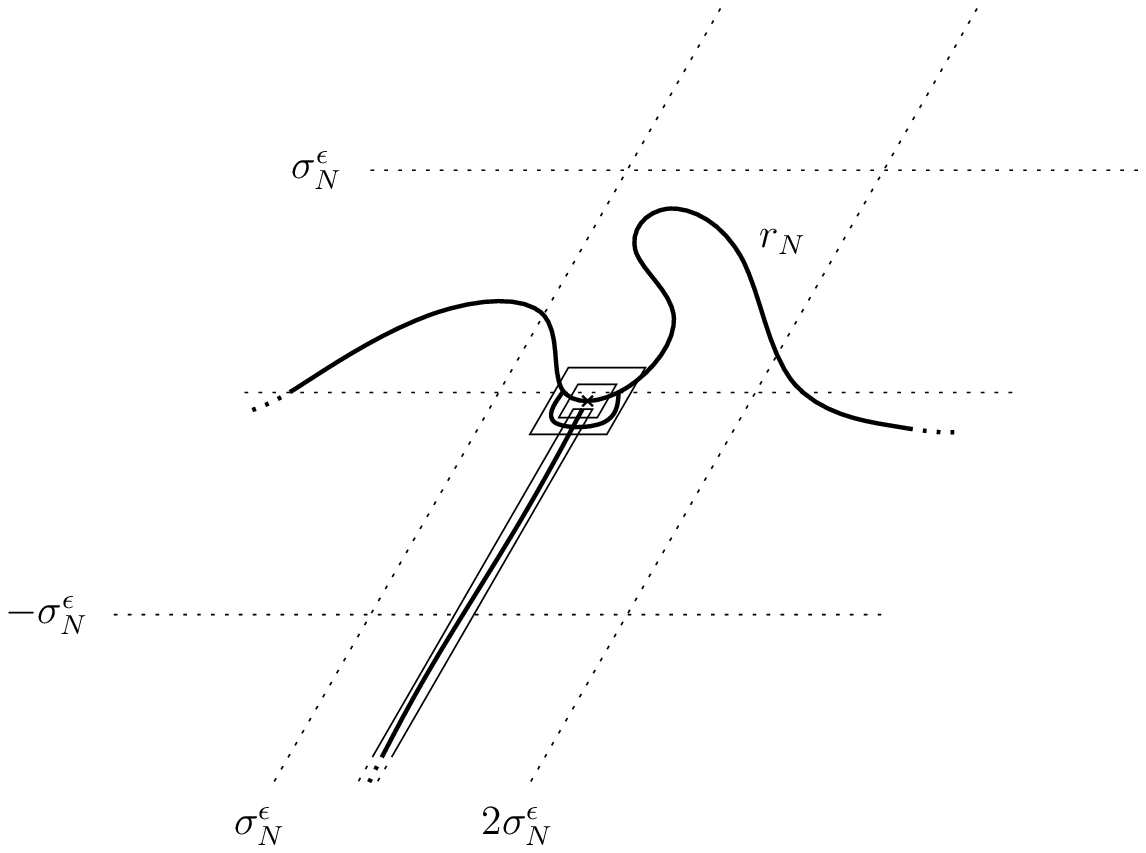}
\caption{\label{uniqueness}With positive probability, the front is connected to the bottom of the strip $\Sca^{0, 3 \sig}$.}
\end{center}
\end{figure}

Now, consider $c t$ disjoint sub-strips $\Sca^{(i)}$ of length $3 \sig$ in the strip $\Sca^{0, t \sig}$, for $c>0$ a small constant: for each $i$, $\FF_N$ is connected to the bottom of $\Sca^{(i)}$ (by a path staying in $\Sca^{(i)}$) with probability at least $\delta' = \delta_4^4 \eta \eta'$, so that $\FF_N$ is connected to the bottom of $\Sca^{0, t \sig}$ with probability at least
$$1 - (1-\delta')^{c t} \geq 1 - C_1 e^{-C_2 t}.$$
\end{proof}

\begin{remark}
Note that this uniqueness property implies in particular that the front does not ``bounce'' backwards too much: we would otherwise observe multiple interfaces in some sub-strips (see Figure \ref{bounce}), which has a very small probability to happen. We use this remark in Section \ref{scalinglimit}, when constructing scaling limits. We have for instance:
\begin{equation}
\PP\big( \text{ $\FF_N$ bounces backwards by a distance $\geq u \sig$ in $\Sca^{0,t\sig}$ } \big) \leq C_1 \frac{t}{u} e^{-C_2 u}.
\end{equation}

\begin{figure}
\begin{center}
\includegraphics[width=10cm]{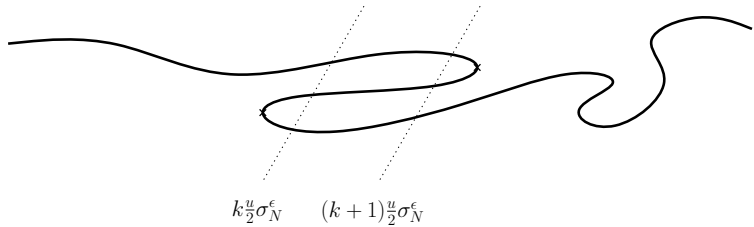}
\caption{\label{bounce}If the front bounced backwards by a distance larger than $u \sig$, it would create several interfaces in one of the sub-strips $\Sca^{k \frac{u}{2} \sig,(k+1) \frac{u}{2} \sig}$.}
\end{center}
\end{figure}
\end{remark}

\subsubsection*{Length}

\begin{proposition} \label{length_front}
The following estimate on the discrete length of $\FF_N$ (its number of edges) holds:
\begin{equation}
\EE\big[|\FF_N \cap \Sca^{0,t\sig}|\big] \asymp t (\sig)^2 \pi_2(\sig)
\end{equation}
uniformly for $t \geq 1$ and $N \geq 1$.
\end{proposition}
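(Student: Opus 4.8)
The plan is to compute the expected length edge-by-edge, exploiting the characterization of front-edges in terms of a four-arm-type event, and then to sum these local probabilities over the strip using the localization estimate to control the vertical spread. Let me think about what it means for a given edge to lie on the front.

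Think about the color of the two hexagons adjacent to an edge. An edge $e$ at height $y$ belongs to the front precisely when it separates a black hexagon connected (through black) to the bottom $\{y=-N\}$ from a white hexagon connected (through white) to the top $\{y=N\}$. Locally, this is governed by a two-arm event in the annulus around $e$: one black arm and one white arm emanating from the two hexagons at $e$. So the natural first step is to show that for an edge $e$ at height $y=s\sigma_N^\epsilon$, $\PP(e\in\FF_N)\asymp \pi_2^{(s)}(\sigma_N^\epsilon)$, where $\pi_2^{(s)}$ denotes the two-arm probability in the near-critical measure with parameter $p(s\sigma_N^\epsilon)$. The upper bound follows by requiring a two-arm event on the scale up to the characteristic length $L_\epsilon(p(y))$ — beyond that scale the arms cost an exponentially small extra factor, since crossings decay via Eq.(\ref{expdecay_eq}). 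For the lower bound one builds the two arms explicitly using RSW below the characteristic length and then propagates them to the boundaries with positive probability, again via the localization/crossing constructions already in hand.

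Next I would convert the near-critical two-arm probability at parameter $p(y)$ into the critical quantity $\pi_2(\sigma_N^\epsilon)$. For edges in the core strip $[\pm u\sigma_N^\epsilon]$ with $u=O(1)$, near-critical equivalence Eq.(\ref{near_critical_2arms}) together with quasi-multiplicativity Eq.(\ref{quasi_prop}) gives $\pi_2^{(s)}(\sigma_N^\epsilon)\asymp\pi_2(\sigma_N^\epsilon)$, uniformly in $|s|\leq u$. Summing over the $\asymp t(\sigma_N^\epsilon)^2$ edges in a height-$O(\sigma_N^\epsilon)$ band of horizontal length $t\sigma_N^\epsilon$ then produces the claimed order $t(\sigma_N^\epsilon)^2\pi_2(\sigma_N^\epsilon)$. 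The lower bound is immediate from this, restricting attention to, say, $|y|\leq\sigma_N^\epsilon$; the matching upper bound requires in addition that the contribution of edges far from the center be negligible.

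Controlling the tail is where I expect the main obstacle to lie. For $y=s\sigma_N^\epsilon$ with $s$ large, two competing effects must be balanced: the characteristic length $L_\epsilon(p(y))$ shrinks like $s^{-\alpha}\sigma_N^\epsilon$ by Eq.(\ref{decay_p}), so the per-edge probability $\pi_2^{(s)}(\sigma_N^\epsilon)$ carries an exponentially small factor from the arms having to cross the off-critical region beyond scale $L_\epsilon(p(y))$ — this is the analogue of the exponential decay used in Proposition \ref{local}. I would bound $\pi_2^{(s)}(\sigma_N^\epsilon)$ above by a critical two-arm probability on scale $L_\epsilon(p(y))$ times an exponential factor $e^{-cs^\alpha}$, use quasi-multiplicativity to relate $\pi_2(L_\epsilon(p(y)))$ back to $\pi_2(\sigma_N^\epsilon)$ (picking up at most a polynomial factor in $s$ from the arm exponent), and then the summability of $\sum_s s^{\beta}e^{-cs^\alpha}$ confines the total to the core contribution. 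The delicate point is keeping the polynomial corrections from quasi-multiplicativity under control uniformly in $N$ while the exponential decay does the real work — precisely the kind of bookkeeping the characteristic length $\sigma_N^\epsilon$ was introduced to streamline, avoiding the logarithmic ambiguities of the bare exponent $N^{4/7}$.
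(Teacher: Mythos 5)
Your proposal follows essentially the same route as the paper's proof: the lower bound by showing $\PP(e \in \FF_N) \asymp \pi_2(\sig)$ for edges in the core strip $[\pm \sig]$ (the paper does this via Eq.(\ref{near_critical_2arms}), Lemma \ref{vertical_crossing} and separation of arms), and the upper bound by bounding the per-edge probability at height $\asymp k\sig$ by an exponential factor $e^{-ck^{\alpha}}$ times a critical two-arm probability at the local scale $L_{\epsilon}(p(k\sig))$, then converting back to $\pi_2(\sig)$ via quasi-multiplicativity, Lemma \ref{regularity_lem} and the a-priori polynomial bound on $\pi_2$, and summing the polynomial-times-exponential series. The bookkeeping you flag as the delicate point is exactly the computation carried out in the paper's displayed chain of inequalities, so your plan is correct and matches the paper's argument.
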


\begin{proof}
It comes from Eq.(\ref{near_critical_2arms}) and Lemma \ref{vertical_crossing} (actually it also uses \emph{separation of arms}, see \cite{N2}) that
$$\PP(e \in \FF_N) \asymp \pi_2(\sig)$$
uniformly for $e \in [\pm \sig]$, which provides the lower bound.

\begin{figure}
\begin{center}
\includegraphics[width=8cm]{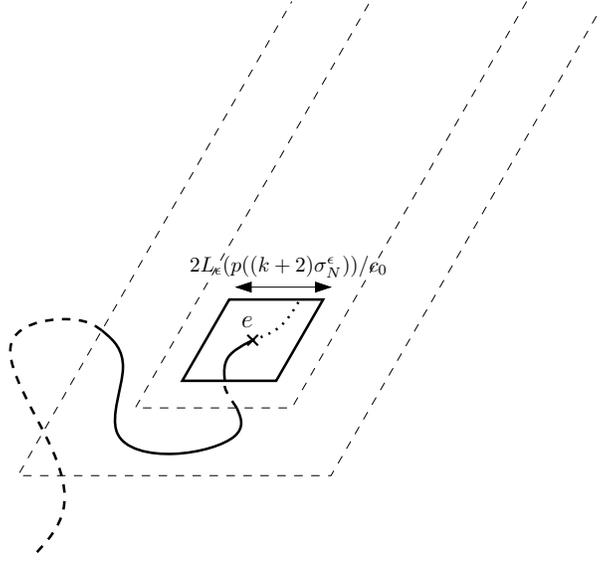}
\caption{\label{twoarms_fig}If $e \in [k \sig,(k+1)\sig]$ is on $\FF_N$, then there are two arms from $e$ going to distance $L_{\epsilon}(p((k+2)\sig))/c_0$, and a black crossing in the U-shaped region of ``width'' $\sig$.}
\end{center}
\end{figure}

On the other hand, the construction of Figure \ref{twoarms_fig} implies that if $e \in [k \sig,(k+1)\sig]$ for some $k \geq 0$,
\begin{equation} \label{vertical_decay}
\PP(e \in \FF_N) \leq C_1 e^{-C_2 k^{\alpha}} \pi_2\big(L_{\epsilon}(p((k+2)\sig))/c_0\big),
\end{equation}
by using Eq.(\ref{near_critical_2arms}) (for the probability of having two arms) and by combining (for the crossing in the U-shaped region) Eqs.(\ref{expdecay_eq}) and (\ref{decay_p}) as for localization, where $c_0$ is chosen large enough so that $L_{\epsilon}(p(\sig))/c_0 \leq \sig$ (this ensures that the box around $e$ is included in $[(k-1) \sig,(k+2) \sig]$). We have
\begin{align*}
\pi_2(L_{\epsilon}(p((k+2)\sig))/c_0) & \leq C_3 \pi_2(L_{\epsilon}(p((k+2)\sig)))\\
& \leq C_4 \pi_2\big(L_{\epsilon}(p((k+2)\sig)),L_{\epsilon}(p(\sig))\big)^{-1} \pi_2(L_{\epsilon}(p(\sig)))\\
& \leq C_5 \bigg( \frac{L_{\epsilon}(p((k+2)\sig))}{L_{\epsilon}(p(\sig))} \bigg)^{-\tilde{\alpha}} \pi_2(\sig)\\
& \leq C_6 \big( (k+2)^{-\alpha'} \big)^{- \tilde{\alpha}} \pi_2(\sig),
\end{align*}
by quasi-multiplicativity (item \ref{quasi_it}. above) and Lemma \ref{regularity_lem} (we also used an a-priori bound on $\pi_2$, that $\pi_2(n_1,n_2) \geq \tilde{c} (n_1/n_2)^{\tilde{\alpha}}$). This provides the upper bound by summing over $k \geq 0$:
\begin{align*}
\EE\big[|\FF_N \cap \Sca^{0,t\sig}|\big] & \leq 2 \sum_{k=0}^{\infty} t (\sig)^2 C_1 e^{-C_2 k^{\alpha}} \pi_2\big(L_{\epsilon}(p((k+2)\sig))/c_0\big)\\
& \leq C_7 t (\sig)^2 \pi_2(\sig) \bigg(\sum_{k=0}^{\infty} (k+2)^{\alpha' \tilde{\alpha}} e^{-C_2 k^{\alpha}} \bigg)\\
& \leq C_8 t (\sig)^2 \pi_2(\sig).
\end{align*}
\end{proof}


\section{Scaling limits}

\subsection{Existence} \label{scalinglimit}

Eqs.(\ref{loc1}) and (\ref{loc2}) show that $\sig$ is the right way (up to multiplicative constants) to scale $\FF_N$ in order to get non-trivial scaling limits. We thus consider the interface $\ff_N$ obtained by scaling $\FF_N$ in both directions by $\sig$ (note that $\ff_N$ depends on $\epsilon$). The now-classic tightness arguments due to Aizenman and Burchard \cite{AB} will allow us to construct scaling limits, and Eq. (\ref{loc2}) will then ensure that the scaling limits so obtained are non-trivial.

Let us make the setting a bit more precise. We work with the following \emph{space of interfaces} $\mathcal{S}$: we consider the set of continuous functions $\gamma : (-\infty,+\infty) \longrightarrow \mathbb{R}^2$ such that the $x$-coordinate of $\gamma(t)$ tends to $+\infty$ as $t \to +\infty$, and to $-\infty$ as $t \to -\infty$, where we identify two curves $\gamma$ and $\tilde{\gamma}$ if they are the same up to reparametrization, i.e. if there exists an increasing bijection $\phi : (-\infty,+\infty) \longrightarrow (-\infty,+\infty)$ such that $\gamma = \tilde{\gamma} \circ \phi$.

Recall that we usually endow the space of curves defined on a compact, say $[0,1]$, with the uniform distance up to reparametrization
\begin{equation}
d(\gamma_1,\gamma_2) = \inf_{\phi} \sup_{t \in [0,1]} |\gamma_1(t) - \gamma_2(\phi(t))|,
\end{equation}
where the infimum is taken over the set of increasing bijections $\phi: [0,1] \longrightarrow [0,1]$. The same distance over $(-\infty,+\infty)$ would be too strong for our purpose, we rather use the particular structure of $\mathcal{S}$ to define a notion of uniform convergence on every compact subinterval of $(-\infty,+\infty)$.

For a curve $\gamma \in \mathcal{S}$, we consider its piece $\gamma^{(n)}$ between $\underline{t}^{(n)}$ the first time it reaches $x=-n$ and $\overline{t}^{(n)}$ the last time it hits $x=n$. We can choose the parametrization such that $\underline{t}^{(n)} = -n$ and $\overline{t}^{(n)}=n$, so that $\gamma^{(n)}$ is parametrized by $[-n,n]$. We then define
\begin{equation}
d^{(n)}(\gamma_1,\gamma_2) = \inf_{\phi} \sup_{t \in [-n,n]} |\gamma_1^{(n)}(t) - \gamma_2^{(n)}(\phi(t))|,
\end{equation}
for $\phi : [-n,n] \longrightarrow [-n,n]$, and finally the product distance
\begin{equation}
d(\gamma_1,\gamma_2) = \sum_{n=1}^{+\infty} \frac{1}{2^n}\big(d^{(n)}(\gamma_1,\gamma_2) \wedge 1\big).
\end{equation}

One can check (see \cite{B_book} for instance) that a sequence $(\gamma_k)$ of interfaces converges in distribution toward $\gamma$ in $(\mathcal{S},d)$ \emph{iff} each $\gamma_k^{(n)}$ converges toward $\gamma^{(n)}$ in $(\mathcal{S}^{(n)},d^{(n)})$, with obvious notation for $\mathcal{S}^{(n)}$. Note also that in our setting, tightness and relative compactness are equivalent, by Prohorov's theorem.

The scaled fronts $\ff_N$ are elements of $\mathcal{S}$, and we are in a position to use the arguments of Aizenman and Burchard {\cite{AB}} in each $\mathcal{S}^{(n)}$.

\begin{proposition}
Denoting by $P_N$ the law of $\ff_N$, the sequence $(P_N)_{N \geq 1}$ is relatively compact in the set of probability measures on $(\mathcal{S},d)$.
\end{proposition}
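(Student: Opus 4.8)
The plan is to verify the crossing criterion of Aizenman and Burchard \cite{AB} uniformly in $N$, one piece $\mathcal{S}^{(n)}$ at a time, and then conclude by a diagonal extraction. By Prohorov's theorem and the characterization of convergence in $(\mathcal{S},d)$ recalled just above the statement, it suffices to show that for each fixed $n \geq 1$ the family of laws of $\ff_N^{(n)}$ is tight in $(\mathcal{S}^{(n)},d^{(n)})$: any subsequence then admits, after a diagonal extraction over $n$, a further subsequence converging simultaneously in every $\mathcal{S}^{(n)}$, hence in $(\mathcal{S},d)$, which is relative compactness of $(P_N)$.

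Fix $n$. The piece $\ff_N^{(n)}$ is the rescaling by $\sig$ of the part of $\FF_N$ lying between abscissas $-n\sig$ and $n\sig$. The localization estimate (Proposition \ref{local}) shows that, for $u$ large, this part stays inside the strip $[\pm u\sig]$ except on an event of probability at most $C_1(n/u+u/n)e^{-C_2 u^\alpha}$, uniformly in $N$. It therefore suffices to control $\FF_N$ inside $\Sca^{-n\sig,n\sig}\cap[\pm u\sig]$, which after scaling is a \emph{fixed} bounded rectangle independent of $N$. Throughout this region the parameter $p(y)$ stays within the RSW window of the characteristic length: for $|y|\leq u\sig$ one has $L_{\epsilon}(p(y))\asymp\sig$ with constants depending only on $u$ (this is where Lemma \ref{regularity_lem} and the defining relation of $\sig$ enter), so percolation there is near-critical at all scales up to order $\sig$, and arm probabilities are comparable to critical ones.

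The Aizenman--Burchard hypothesis asks for a power-law bound, uniform down to the lattice mesh, on the probability that the curve makes $k$ separate crossings of an annulus; in unscaled coordinates this is an annulus $A(z;\rho_1\sig,\rho_2\sig)$ and the desired bound is $C_k(\rho_1/\rho_2)^{\zeta(k)}$ with $\zeta(k)\to\infty$. For a percolation interface, $k$ disjoint crossings of an annulus force $2k$ arms of alternating colors across it, so the probability is dominated by the $2k$-arm probability (the natural extension of $\pi_2,\pi_4$) evaluated under the gradient measure. By the near-critical comparison --- valid for any fixed number of arms by the same arguments as Eq.(\ref{near_critical_2arms}), see \cite{N2} --- and by quasi-multiplicativity, this is comparable, with constants depending only on $u$ and $k$, to the critical quantity at $p=1/2$, which decays like $(\rho_1/\rho_2)^{\alpha_{2k}+o(1)}$ with the polychromatic exponents $\alpha_{2k}\to\infty$. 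Thus $\zeta(k)=\alpha_{2k}$ grows without bound and the constants are $N$-independent: this is exactly the hypothesis of \cite{AB}, which then yields tightness of $\ff_N^{(n)}$ with all subsequential limits supported on continuous curves.

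The delicate point, and the main obstacle, is precisely the \emph{uniformity in $N$} of this multi-crossing bound --- which is what the characteristic length $\sig$ is designed to provide. One must check that the near-critical comparison applies to every annulus in the relevant range of centers and radii: for annuli reaching the boundary of the near-critical window one uses $L_{\epsilon}(p(y))\asymp\sig$ for $|y|\lesssim\sig$ together with Lemma \ref{regularity_lem}, while annuli straying to large height are handled by the exponential decay Eq.(\ref{expdecay_eq}) and Eq.(\ref{decay_p}) (already exploited in the localization step), which make any crossing there exponentially improbable. Once the bound holds with constants independent of $N$, the Aizenman--Burchard machinery applies verbatim in each $\mathcal{S}^{(n)}$, and the diagonal argument upgrades tightness on the pieces to relative compactness of $(P_N)_{N\geq 1}$ on $(\mathcal{S},d)$.
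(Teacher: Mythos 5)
Your overall strategy --- tightness of the pieces $\ff_N^{(n)}$ in each $(\mathcal{S}^{(n)},d^{(n)})$ via the Aizenman--Burchard crossing criterion, followed by a diagonal extraction --- is the same as the paper's, but there is a genuine gap in the confinement step. You assert that $\ff_N^{(n)}$ is ``the rescaling by $\sig$ of the part of $\FF_N$ lying between abscissas $-n\sig$ and $n\sig$,'' and hence that vertical localization (Proposition \ref{local}) alone traps it in a fixed rectangle. That is not what $\gamma^{(n)}$ is: by the paper's definition it runs from the \emph{first} time the curve reaches $x=-n$ to the \emph{last} time it hits $x=n$, so it contains every horizontal excursion the front makes beyond $x=\pm n$ in between --- a priori the front could backtrack and wander far to the left or right before settling. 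Proposition \ref{local} says nothing about this horizontal spread, so your ``fixed bounded rectangle independent of $N$'' is unjustified as stated, and hypothesis (H1) of \cite{AB} cannot be applied until it is. The paper closes exactly this hole with Proposition \ref{uniq}: uniqueness of the crossing in the strips $\Sca^{-(n+t)\sig,-n\sig}$ and $\Sca^{n\sig,(n+t)\sig}$ prevents the front from recrossing them, which confines $\ff_N^{(n)}$ horizontally to $[-(n+t),n+t]$ except on an event of probability $C_1e^{-C_2t}$, uniformly in $N$; only then is localization invoked for the vertical direction. You need this (or an equivalent no-backtracking estimate, cf.\ the remark accompanying Figure \ref{bounce}) before the rest of your argument can run.

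Two further comments. For the multi-crossing bound you take a different, and heavier, route than the paper: you dominate $k$ crossings by $2k$-arm probabilities, invoke the near-critical comparison for arbitrarily many arms, and use the divergence of the exponents $\alpha_{2k}$. This is correct on the triangular lattice, but the paper obtains (H1) more cheaply: RSW below $L_{\epsilon}$ gives the one-crossing power bound $c(r/R)^{\alpha}$, and the BK inequality alone upgrades it to $c^{k/2}(r/R)^{\alpha k/2}$ --- no knowledge of arm exponents is needed, which is precisely what allows the paper's subsequent remark that the whole construction also works on $\mathbb{Z}^2$, where such exponents are not available. Finally, your claim that $L_{\epsilon}(p(y))\asymp\sig$ for all $|y|\leq u\sig$ is false near $y=0$, where $p(y)\to 1/2$ and $L_{\epsilon}(p(y))\to\infty$; what holds (and what the RSW estimates require) is only the lower bound $L_{\epsilon}(p(y))\geq c(u)\,\sig$ in that strip.
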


\begin{proof}
We show the existence of scaling limits in each $\mathcal{S}^{(n)}$. Let us fix an integer $n \geq 1$ and $\epsilon>0$. First, there exist $t, u \geq 1$ such that for all $N$,
\begin{equation}
\PP(\ff_N^{(n)} \subseteq [-(n+t) ,(n+t) ] \times [-u ,u ]) \geq 1- \epsilon/2.
\end{equation}
Indeed, it comes from uniqueness in the strips $\Sca^{-(n+t) \sig,-n \sig}$ and $\Sca^{n \sig, (n+t) \sig}$ (Proposition \ref{uniq}), and then localization for $\FF_N \cap \Sca^{-(n+t) \sig,(n+t) \sig}$ (Proposition \ref{local}).

We can now use Theorem 1.2 of \cite{AB}. Indeed, since the Russo-Seymour-Welsh estimates hold in the strip $[\pm u \sig]$ uniformly for all $N$, we have for any annulus ${\mathcal A} (z; r, R) =\{\tilde{z} \text{\: s.t. \:} r < |\tilde{z} - z| < R\}$,
$$\PP ( \mathcal{A} (z; r, R)  \text{ is crossed by } \ff_N^{(n)}) \leq  c (r / R)^{\alpha}$$
for two universal constants $\alpha, c >0$, and the BK inequality implies that
$$\PP ( \mathcal{A}  (z; r, R) \text{ is crossed $k$ times by } \ff_N^{(n)}) \leq c^{k/2} (r / R)^{\alpha k/2}$$ which is exactly the hypothesis (H1) of {\cite{AB}} (uniform power bounds on the probability of multiple crossings in annuli). Hence, the sequence $(P_N^{(n)})_{N \geq 1}$, consisting of the laws of the $\ff_N^{(n)}$, is tight. This proves that the sequence $(P_N)_{N \geq 1}$ is relatively compact (using a diagonal argument).
\end{proof}

\begin{remark}
Up to now, for the definition of $\sig$ as well as for the subsequent reasonings, we have made no mention of the existence of the exponent $\nu$. The previous results are thus valid on other regular lattices, for which conformal invariance has not been established but RSW is known to hold, like the square lattice $\mathbb{Z}^2$. For this lattice, we know that
$$L_{\epsilon}(p) \leq C |p-p_c|^{-A}$$
for some $C,A>0$ (see \cite{N2}), which implies that for  some $\alpha>0$,
$$\sig \leq N^{1-\alpha}.$$
Hence, our proofs are still valid in this case: the front is unique, it converges toward the line $p=p_c$, and scaling it by $\sig$ produces non-trivial scaling limits.
\end{remark}

\subsection{Properties}

In this final section, we briefly discuss some properties of the potential scaling limits of the front, coming from the way they were constructed. Their behavior is comparable to that of near-critical interfaces, studied in \cite{NW}: indeed, we have seen that the front remains (with high probability) in a region where $L_{\epsilon}(p(y)) = O(y)$. We informally describe how to adapt the results and ideas developed in \cite{NW} to the present inhomogeneous setting -- since no real modification is needed compared to the near-critical case, we do not repeat the arguments in detail.

\begin{itemize}

\item{Similarities with critical percolation}

On the one hand, these scaling limits share some properties with scaling limits of critical percolation interfaces, i.e. $SLE_6$. On the discrete level, the probabilities of arm events remain the same up to multiplicative constants (we have stated it for $2$ or $4$ arms only, but this is true for any number of arms), and we have seen that this implies for instance that the discrete length remains comparable to what it is in critical regime (this is Proposition \ref{length_front}).

In the scaling limit, the Hausdorff dimension of any limit must be the same as that of critical percolation interfaces: if $\ff$ is a weak limit along some sub-sequence $(\ff_{N_k})$ of the sequence of interfaces, then the Haussdorff dimension of $\ff$ is almost surely $7/4$.

%

%
%

\item{Local asymmetry}

On the other hand, when the front is macroscopically far from $y=0$, i.e. at a distance of order $\sig$, it is by construction in a region where $L_{\epsilon}(p(y)) \asymp |y| \asymp \sig$. Gradient percolation thus provides a natural setting where the \emph{off-critical} regime, studied in \cite{NW}, arises.

On the discrete level, one can notice that a hexagon $h$ is on the front \emph{iff} there exist two arms from $h$ to the bottom and top boundaries of the strip (respectively black and white), no matter the state of $h$. Hence, denoting by $\partial \FF_N$ (resp. $\partial^+ \FF_N$, $\partial^- \FF_N$) the set of hexagons adjacent to $\FF_N$ (resp. black hexagons, white hexagons),
\begin{align*}
\PP(h \in \partial^+ \FF_N) - \PP(h \in \partial^- \FF_N) & \\
& \hspace{-3cm} = \PP(h \in \partial \FF_N \text{ and $h$ is black}) - \PP(h \in \partial \FF_N \text{ and $h$ is white})\\
& \hspace{-3cm} = \PP(\text{ two arms from $h$ }) \times (2 p(y_h) -1).
\end{align*}
Consequently, in any sub-strip $s=[0,t \sig] \times [u_1 \sig,u_2 \sig]$ ($u_1 < u_2 < 0$), the following asymmetry property holds:
\begin{equation}
\mathbb{E}\big[|\partial^+ \FF_N \cap s|\big] - \mathbb{E}\big[|\partial^- \FF_N \cap s|\big] \asymp \frac{\sig}{N} \times \big( t (\sig)^2 \pi_2(\sig) \big) \approx N^{4/7},
\end{equation}
which is much larger than the statistical deviation
\begin{equation}
\sqrt{\mathbb{E}\big[|\partial \FF_N \cap s|\big]} \asymp \sqrt{t (\sig)^2 \pi_2(\sig)} \approx \sqrt{N}.
\end{equation}
Hence, there is a noticeable excess of black hexagons over white hexagons on the interface, which thus turns more in one direction.

If $\ff$ is the weak limit along some sub-sequence $(\ff_{N_k})$ of the sequence of interfaces, the same type of \emph{local} asymmetry as for off-critical interfaces in \cite{NW} should hold for any (small) portion of $\ff$ located in a strip of the form $[u_1,u_2]$ ($u_1 < u_2 < 0$): the interface turns more in one direction, on every scale. In particular, its law is singular with respect to that of $SLE_6$ -- so that gradient percolation interfaces are not strictly speaking in the same class of universality as critical percolation interfaces.

\end{itemize}

\section*{Acknowledgments}

The author would like to thank W. Werner for many stimulating discussions, in particular for very useful comments and suggestions regarding this paper. This research was supported in part by the NSF under grant OISE-07-30136.

\end{document}